\def\hB{\hspace*{\fill}$\qed$}
\title{Finite asymptotic dimension and the coarse  assembly map}
\author{
Ulrich Bunke\thanks{Fakult{\"a}t f{\"u}r Mathematik,
Universit{\"a}t Regensburg,
93040 Regensburg,
ulrich.bunke@mathematik.uni-regensburg.de} 
}
\numberwithin{equation}{section}
\newtheorem{theorem}{Theorem}[section] 
\newtheorem{prop}[theorem]{Proposition}
\newtheorem{lem}[theorem]{Lemma}
\newtheorem{ddd}[theorem]{Definition}
\theoremstyle{remark}
\theoremstyle{definition}
\newtheorem{ex}[theorem]{Example}
\newtheorem{rem}[theorem]{Remark}
\newcommand{\alg}{\mathrm{alg}}
\newcommand{\ee}{\mathrm{e}}
\newcommand{\EE}{\mathrm{E}}
\newcommand{\UBC}{\mathbf{UBC}}
\newcommand{\Yo}{\mathrm{Yo}}
\newcommand{\Hilb}{\mathbf{Hilb}}
\newcommand{\BC}{\mathbf{BC}}
\newcommand{\Cofib}{\mathrm{Cofib}}
\newcommand{\incl}{\mathrm{incl}}
\newcommand{\UK}{\mathrm{UK}}
\newcommand{\bF}{{\mathbf{F}}}
\newcommand{\cO}{{\mathcal{O}}}
\newcommand{\cU}{{\mathcal{U}}}
 \newcommand{\Cat}{{\mathbf{Cat}}}
\newcommand{\lf}{\mathrm{lf}}
\newcommand{\bP}{\mathbf{P}}
\newcommand{\KK}{\mathrm{KK}}
\newcommand{\cone}{\mathrm{cone}}
\newcommand{\nCcat}{C^{*}\mathbf{Cat}^{\mathrm{nu}}}
\renewcommand{\loc}{\mathrm{loc}}
\newcommand{\exa}{\mathrm{ex}}
\newcommand{\str}{\mathrm{str}}
\newcommand{\coarse}{\mathrm{coarse}}
\newcommand{\disc}{\mathrm{disc}}
\begin{document}  \maketitle \begin{abstract} 	
  In this note we  give a simple argument for the fact  that the coarse assembly map 
  for a strong coarse homology theory  with weak transfers and
  a bornological coarse space of weakly finite homotopical asymptotic dimension is a phantom equivalence.
  \end{abstract}

  \tableofcontents

  \section{Introduction}

  It is a classical result in coarse geometry that the coarse Baum-Connes conjecture holds for proper metric spaces
  of finite asymptotic dimension \cite{Yu_1998}. The proof given in  \cite{Yu_1998}   uses  the analytic details of the 
  construction of coarse $K$-homology in terms of $C^{*}$-algebras.  
  In order to decouple the internal details of the construction of coarse homology theories from the verification that the coarse assembly map is an equivalence in \cite{buen} we introduced the formalism of bornological coarse spaces and coarse homology theories. Using this formalism and axiomatizing an argument of \cite{nw1}   it was shown in \cite{ass} that the coarse assembly map for any strong coarse homology theory and a bornological coarse space of weakly finite asymptotic dimension is an equivalence. Applied to the case of the coarse $K$-homology  this  recovers  the result of  \cite{Yu_1998} using arguments in  coarse homotopy theory. The most technical part of the argument in \cite{ass}  is the verification  
 of the flasqueness of the coarsening space \cite[Thm. 5.55]{buen}. 
 
 The goal of this note is to present a  proof for the fact  that the coarse assembly map 
 for a strong coarse homology theory  with weak transfers and
 a bornological coarse space of weakly finite homotopical asymptotic dimension is a phantom equivalence.
 It avoids the usage of   \cite[Thm. 5.55]{buen}, but requires the additional assumption of the existence of  weak transfers on the coarse homology theory, which is fortunately satisfied by all examples of coarse homology theories of $K$-theoretic nature (topological or algebraic).  Also the statement is slightly weaker as we only show that the coarse assembly is a phantom equivalence.  
 On the other side, our argument  allows to weaken the assumption on $X$
 from weakly  finite asymptotic dimension to weakly finite homotopical asymptotic dimension.

 In \cref{okgpwergwrw} we  recall the  construction of the coarse assembly map from \cite{ass}
 and state the main result.
 In the  \cref{okgpwergwrw1} we introduce the notion of weakly finite homotopical asymptotic dimension
and prove the main theorem. 
 
 These notes grew out of an attempt to present a complete proof of the coarse Baum-Connes conjecture
under the finite asymptotic dimension assumption  in a lecture  course on coarse homotopy theory at the University of Regensburg in fall 2024.
  
  {\em Acknowledgement: The author was supported by the SFB 1085 (Higher Invariants) funded by the Deutsche Forschungsgemeinschaft (DFG).}

\section{The coarse assembly map}\label{okgpwergwrw}

In this section we introduce the coarse assembly map and state the main theorem.
We will assume familiarity with  the categories $\BC$ of bornological coarse spaces and $\UBC$ of uniform bornological coarse spaces
\cite{buen}, \cite{ass} and the  respective notions of  homology theories.    A coarse homology theory
  is a functor $E:\BC\to \cC$ to a cocomplete stable $\infty$-category $\cC$ which is coarsely invariant, excisive, $u$-continuous and vanishes on flasques, see  \cite[Sec. 4.4]{buen} for details. As explained in \cite[Sec. 4]{buen} there exists a universal 
 coarse homology theory $$\Yo:\BC\to \Sp\cX$$ such that  the pull-back along $\Yo$ identifies
 $\cC$-valued coarse homology theories with colimit-preserving functors $\Fun^{\colim}(\Sp\cX,\cC)$.
 We will use the same name for the coarse homology theory and the corresponding colimit-preserving functor. 
 
 A coarse homology theory is called strong if it  in addition annihilates weakly flasque bornological coarse spaces \cite[Def. 4.19]{equicoarse}. Likewise there exists  a universal strong coarse homology theory $$\Yo^{\str}:\BC\to \Sp\cX^{\str}\ .$$
 Since every strong coarse homology theory is in particular  a coarse homology theory we have a canonical
 colimit-preserving functor $$\iota^{\str}:\Sp\cX\to \Sp\cX^{\str}$$ classifying $\Yo^{\str}$.

A local homology theory is a functor $F:\UBC\to \cC$  to a cocomplete   stable $\infty$-category $\cC$ which is  homotopy invariant, (closed)-excisive, $u$-continuous and vanishes on flasques \cite[Def. 3.12]{ass}. Similarly as for coarse homology theories  there exists  a universal  local homology theory $$\Yo^{\loc}:\UBC\to \Sp\cB$$ such that the pull-back along $\Yo^{\loc}$ identifies
 $\cC$-valued local  homology theories with colimit-preserving functors $\Fun^{\colim}(\Sp\cB,\cC)$. 
 We will use again same name for the  local  homology theory and the corresponding colimit-preserving functor. 
 
 The categories of uniform bornological coarse spaces and bornological coarse spaces are related by a forgetful functor
\begin{equation}\label{dfgbdbdgdb}\iota:\UBC\to \BC\ .
\end{equation}
 
 \begin{ex} The compositions $$\bF:=\Yo \circ \iota:\UBC\to \Sp\cX\ , \qquad \bF^{\str}:=\Yo^{\str}\circ \iota:\UBC\to \Sp\cX^{\str}$$ are  local homology theories. \hB
 \end{ex}

 Besides the forgetful functor from \eqref{dfgbdbdgdb} we have the cone and the cone-at-$\infty$-functors $$\cO,\cO^{\infty}:\UBC\to \BC\ ,$$ see \cite[Sec. 8]{ass}. \begin{rem}
In this remark we recall their construction  Note that $\BC$ and $\UBC$ have symmetric monoidal structures $\otimes $.
 The cone $\cO^{\infty}(X)$ over a uniform bornological coarse space $X$ is  the bornological  coarse space obtained from the $\R\otimes \iota X$ by equipping it with the hybrid coarse structure associated to the big family of subsets $((-\infty,n]\times X)_{n\in \nat}$, see \cite[Def. 5.10]{buen}. Thus a coarse entourage of $\R\otimes \iota X$ is a coarse entourage of $\cO^{\infty}(X)$ if and only if for every uniform entourage $Z$ of $\R\otimes X$ there exists
 $n$ in $\nat$ such that $U\cap ((n,\infty)\times X)^{2}\subseteq Z$. The cone $\cO(X)$ is the subset $[0,\infty)\times X$ of $\cO^{\infty}(X)$ with the induced structure.
 We have a natural sequence of morphisms of bornological coarse spaces
\begin{equation}\label{gewrfwerfwerfwf}\iota X\xrightarrow{x\mapsto (0,x)} \cO(X)\xrightarrow{\incl} \cO^{\infty}(X)\xrightarrow{\partial^{\cone}} \R\otimes \iota X
\end{equation} 
 where $\partial^{\cone}$ is given by the identity of underlying sets and called the cone boundary. \hB
 \end{rem}
 Applying    $\Yo^{\str}$  to the sequence \eqref{gewrfwerfwerfwf} and letting $X$ vary we get
 the cone sequence \cite[(9.1)]{ass} \begin{equation}\label{grewofijowerpfwer}\bF^{\str} \to \Yo^{\str}\cO\to \Yo ^{\str}\cO^{\infty}\stackrel{\partial^{\cone}}{\to} \Sigma \bF^{\str}
\end{equation}
  of local homology theories which we can also view  as a fibre  sequence of  colimit preserving functors from $\Sp\cB$ to $\Sp\cX^{\str}$.
 
 The Rips complex functor $(X,U)\mapsto P_{U}(X)$ sends a bornological coarse space $X$ with a coarse entourage $U$   in $\cC_{X}$ containing the diagonal to a uniform bornological coarse space. Thereby $P_{U}(X)$  is the simplical space of finitely supported $U$-bounded probability measures on $X$ with the coarse and uniform structure induced by the spherical path metric on the simplices, and the minimal compatible bornology such that the inclusion $X\to P_{U}(X)$ by Dirac measures is bornological. By composing with $\Yo^{\loc}$ and forming the colimit over the entourages  (more precisely using left Kan-extensions, see \cite[Def. 5.1]{ass})
 we get a functor
$$\bP:\BC\to \Sp\cB$$ which turns out to be  a coarse homology theory.

 The inclusions $X\to P_{U}(X)$ by Dirac measures for all $X$ and $U$  induce a natural equivalence $\iota^{\str}\stackrel{\simeq}{\to} \bF^{\str}\circ \bP$ of functors from $\Sp\cX$ to $ \Sp\cX^{\str}$.

%
%

Precomposition  of the cone sequence \eqref{grewofijowerpfwer} with $\bP$ and   the equivalence $\iota^{\str}\simeq \bF^{\str}\circ \bP$ yields the fibre sequence of 
$\Sp\cX^{\str}$-valued coarse homology theories
 \begin{equation}\label{herthrtgertge} \iota^{\str} \to \Yo^{\str}\cO\bP\to \Yo ^{\str}\cO^{\infty}\bP\stackrel{\partial^{\coarse}}{\to} \Sigma \iota^{\str}\ ,
\end{equation}
 i.e., a fibre sequence of colimit preserving  functors from $\Sp\cX$ to $\Sp\cX^{\str}$.
   
   \begin{ddd}The map $\partial^{\coarse}$ is called the universal coarse assembly map.\end{ddd}
 We  postcompose with a strong coarse homology theory $E:\Sp\cX^{\str}\to \cC$ 
 and evaluate the sequence \eqref{herthrtgertge} at an object $X$ in $\Sp\cX$. Following \cite[Def. 9.7]{ass} we adopt the following:
 
\begin{ddd}\label{kogpewgwgfwf}
The map
\begin{equation}\label{fwefdwdqw}\mu^{\coarse}_{E,X}:E\cO^{\infty}\bP(X)\to \Sigma E(\iota^{\str}X)
\end{equation} 
  induced by $\partial^{\coarse}$ is called the coarse assembly map for $E$ and $X$.
  \end{ddd}

\begin{rem}
We refer to \cite{ass} for a detailed discussion of the domain of the coarse assembly map. Recall that 
 the coarse homology theory $E$ is called additive if the target $\cC$ is also complete and  we have $E(I_{min,min})\simeq \prod_{i\in I} E(\{i\})$
for any set $I$.  If $W$ is homotopy equivalent in $\UBC$ to a finite-dimensional simplicial complex with
the spherical path metric and $E$ is additive, then $E\cO^{\infty}(W)$ is  by \cite[Prop. 11.23]{ass} equivalent to $\Sigma^{-1}E(*)^{\lf}(W)$, the locally finite homology of $W$ with coefficients in
$\Sigma^{-1}E(*)$.  If $X$ is a bornological coarse space of bounded geometry, then $P_{U}(X)$ is finite dimensional for every $U$ in $\cC_{X}$. In this case   we can  express the domain of \eqref{fwefdwdqw}
 as  $$E\cO^{\infty}\bP(X)\simeq\colim_{U\in \cC_{X}} \Sigma^{-1}E(*)^{\lf}(P_{U}(X))\ .$$
 This is close to the   the classical formula for the domain of the coarse assembly map, see e.g. \cite[Sec. 5.6]{roe_lectures_coarse_geometry}, \cite[Sec. 2]{hr}.
 \hB
\end{rem}

\begin{rem}
The coarse Baum-Connes assembly map appearing in the coarse Baum-Connes conjecture \cite{hr}  corresponds to  the coarse assembly map $$\mu_{K\cX,X}^{\coarse}:K\cX\cO^{\infty}\bP(X)\to \Sigma K\cX(\iota^{\str}X)$$ for the topological coarse $K$-homology theory $K\cX:\BC\to \Mod(KU)$  constructed in detail in \cite[Sec. 8]{buen},  \cite{coarsek}. In the classical   literature the coarse $K$-homology   is defined  in terms of Roe algebras   as a $\Z$-graded group-valued functor on  proper metric spaces $X$.
The details of the classical coarse Baum-Connes assembly map  are  very analytical. In the work of  \cite{hr} 
the domain of the assembly map is expressed  in terms of an analytic model of the locally finite $K$-homology like
$\KK_{*}(C_{0}(Y),\C)$ and Paschke duality. 
Alternatively,  in \cite{Yu_1998}, \cite{zbMATH01388911} the domain of the coarse Baum-Connes assembly map is expressed in terms of  Yu's  localization algebra.
  Here $Y$ is a metric space, e.g. the Rips complex $P_{U}(X)$ if $X$ has bounded geometry.
 \hB\end{rem}
  
The main point of   \cref{kogpewgwgfwf} above is to write the coarse assembly map
as a component of a natural transformation between coarse homology theories defined on all of $\BC$. The strong coarse homology theory $E$ can be considered as a variable.
 In our approach the internal details of the construction of $E$
are not relevant.

In order to state our main theorem we need some notions from higher category theory.
Let $\cC$ be a cocomplete stable $\infty$-category.  
\begin{ddd}[D. Clausen]\label{kopwegregwefw}\mbox{}\begin{enumerate}\item\label{elgpwegewrfw}
A morphism $C\to D$ in $\cC$ is compact if for every filtered system $(E_{i})_{i\in I}$ in $\cC$ with $\colim_{i\in I}E_{i}\simeq 0$ and map $D\to E_{i}$ for some $i$ in $I$ there exists $i\to j$ in $I$  such that
$C\to D\to E_{i}\to E_{j}$ vanishes.
\item The category $\cC$ called is compactly assembled if $\cC\simeq \Ind_{\aleph_{1}}(\cC^{\aleph_{1}})$ and
every object of   $\cC^{\aleph_{1}}$ is the colimit of an $\nat$-indexed system with compact structure maps, where  $\cC^{\aleph_{1}}$ is the subcategory of $\aleph_{1}$-compact objects. 
\end{enumerate}
\end{ddd}

\begin{ddd}\mbox{}
\begin{enumerate}
\item A morphism  $D\to E$ in $\cC$ is a phantom morphism if for every compact morphism $C\to D$ the composition $C\to D\to E$ vanishes.

\item 
An object $E$ in $\cC$  is called a phantom object if $\id_{E}$ is a phantom morphism.   \item A morphism $C\to D$ in $\cC$ is a phantom equivalence if its fibre is a phantom object.\end{enumerate}
\end{ddd}

\begin{ex}\label{okwegpwegerw9}
 Note that $\cC$ is the target category of the coarse homology theory $E$ under consideration.
If $\cC$ is compactly generated or compactly assembled, then   phantom objects in $\cC$ are zero objects. 
In this case phantom equivalences are equivalences.

Examples of coarse homology theories to which our results apply are the coarse algebraic $K$-homology
$K \cX^{\alg}_{\bC}$ with coefficients in a left-exact  $\infty$-category $\bC$ introduced in \cite{unik} or the coarse topological $K$-theory
$K\cX_{\bC}$ with coefficients in a $C^{*}$-category $\bC$ constructed in \cite{coarsek}, where we set $K\cX:=K\cX_{\Hilb_{c}}(\C)$. In these examples the target categories are  spectra $\Sp$ or of $KU$-modules $\Mod(KU)$, respectively. They are both compactly generated.

Note that the functors $K \cX^{\alg}_{\bC}$ and $K\cX_{\bC}$ above are constructed by  composing  the algebraic or topological 
$K$-theory functor with a controlled object functor $\bV_{\bC}$ from $\BC$ to $\Cat^{\exa}_{\infty}$ or $\nCcat$, respectively. If one replaces algebraic $K$-theory by
the universal finitary localizing invariant $\cU_{\loc}:\Cat^{\exa}_{\infty}\to \cM_{\loc}$ \cite{MR3070515} or, likewise  the topological $K$-theory by the  $E$-theory functor $\ee:\nCcat\to \EE$, then one obtains
coarse homology theories $\UK\cX_{\bC}:=\cU_{\loc}\circ \bV_{\bC}:\BC\to \cM_{\loc}$ (see \cite[Sec. 5.3]{unik}), or $\EE\cX_{\bC}:=\ee\circ \bV_{\bC}:\BC\to \EE$ (see \cite[Rem. 2.59]{Bunke:2024aa}), respectively which also satisfy the assumptions in \cref{jifofwqewfwf} below.
By a result of Efimov  we know that the stable $\infty$-category of localizing motives $\cM_{\loc}$ is compactly assembled. 
The stable $\infty$-category $E$  representing $E$-theory  is compactly assembled by \cite{budu}.
 \hB
\end{ex}
 
The main goal of this note is to show:
 
\begin{theorem}\label{jifofwqewfwf}
If the bornological coarse space $X$ has  weakly finite homotopical asymptotic dimension and the strong coarse homology theory $E$ admits weak transfers, then $\mu^{\coarse}_{E,X}$ is a phantom equivalence.   
\end{theorem}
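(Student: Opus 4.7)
The plan is first to identify the fibre of $\mu^{\coarse}_{E,X}$ and then to show it is a phantom object. Applying $E$ to the fibre sequence \eqref{herthrtgertge} and evaluating at $X$ gives a fibre sequence in $\cC$ whose third map is $\mu^{\coarse}_{E,X}$, so its fibre is $E\cO\bP(X)$. Thus the theorem reduces to showing $E\cO\bP(X)$ is a phantom object, i.e.\ that every compact morphism $C\to E\cO\bP(X)$ vanishes.

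To exploit compactness, I would next write $E\cO\bP(X)$ as a filtered colimit. Since $\bP$ is defined by left Kan extension of the Rips construction along $\Yo$, we have $\bP(X)\simeq \colim_{U\in\cC_X}\Yo^{\loc}(P_U(X))$ and hence $E\cO\bP(X)\simeq \colim_{U\in\cC_X}E\cO(P_U(X))$. By the defining property of compact morphisms applied to this filtered diagram, to conclude that a compact $C\to E\cO\bP(X)$ vanishes it suffices to lift it along the cocone to some $C\to E\cO(P_U(X))$ and then to find an entourage $V\supseteq U$ such that the structure map $E\cO(P_U(X))\to E\cO(P_V(X))$ kills the lift.

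The hypothesis of weakly finite homotopical asymptotic dimension of $X$ should supply precisely such a $V$: I expect the definition in \cref{okgpwergwrw1} to say that for every $U$ there exist $V\supseteq U$ and a factorisation, up to uniform homotopy in $\UBC$, of $P_U(X)\to P_V(X)$ through a space decomposing as a finite union $W_{0}\cup\dots\cup W_{n}$ with each $W_{i}$ a uniformly coarsely disjoint union of uniformly bounded subsets. Using homotopy invariance of the local homology theory $\Yo^{\str}\cO$ together with iterated closed excision on this finite decomposition, the vanishing of the structure map on $C$ reduces to checking that $E\cO(W_{i})$ vanishes for each individual piece.

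On each piece $W_{i}$ the weak transfer hypothesis on $E$ is deployed as the substitute for the flasqueness of the coarsening space \cite[Thm. 5.55]{buen} used in \cite{ass}. A uniformly coarsely disjoint union carries a self-map absorbing a countable disjoint copy; the weak transfer converts this into an additivity-type splitting on $E$, and combining it with translation along the cone coordinate produces an Eilenberg swindle that trivialises $E\cO(W_{i})$. Threading this back gives the desired vanishing $C\to E\cO\bP(X)=0$, whence $E\cO\bP(X)$ is phantom and $\mu^{\coarse}_{E,X}$ is a phantom equivalence. The hardest step will be coordinating the weak (rather than strict) transfer axiom with an up-to-homotopy decomposition of $P_U(X)$, and this is also precisely what forces the conclusion to be a phantom equivalence rather than an honest equivalence: the swindle is available only against compact test objects, not as a self-contraction of $E\cO\bP(X)$ itself.
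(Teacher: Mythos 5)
Your opening reduction is right: by the fibre sequence \eqref{herthrtgertge} the fibre of $\mu^{\coarse}_{E,X}$ is $E\cO\bP(X)$, so the task is to show this is a phantom object; and $u$-continuity does reduce you (via the colimit over entourages) to the case where $X$ itself has finite homotopical asymptotic dimension.

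From there both your guess at the definition and your guess at the mechanism diverge from the paper, and the gaps are real. \cref{elrhpgregeg} does not ask for a factorisation of $P_U(X)\to P_V(X)$ through a space covered by finitely many uniformly bounded, coarsely disjoint families; it asks for morphisms $\phi_{n}:X\to W_{n}$ in $\BC$ to simplicial complexes of uniformly bounded dimension such that each $\Yo(\phi_{n})$ is a \emph{split monomorphism} and the assembled map $\phi=\sqcup_{n}\phi_{n}:\nat_{min,min}\otimes X\to W_{h}$ is controlled, where $W_{h}$ carries the hybrid coarse structure on $\bigsqcup_{n}W_{n}$. The argument is not an Eilenberg swindle. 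One precomposes $\phi$ with the weak transfer $\tr_{X}$ and projects to the relative object $E\cO\bP(W_{h},\cV)$, which vanishes because $\Yo(W_{h},\cV)\in\Sp\cX\langle\disc\rangle$ when $\dim W<\infty$ (\cref{kgopwergrefwef}, via \cite[Thm.\ 5.57]{buen}) and $E\cO\bP$ kills discrete spaces (\cref{kogpwergwrefwf}). Compactness of $\kappa:C\to E\cO\bP(X)$ is then used against the sequential colimit \eqref{fewfeqwdqdwedq} to choose a single $n$ for which the composite through $E\cO\bP(W_{h},V_{n-1})$ vanishes; excision isolates the summand $E\cO\bP(W_{n})$ and identifies the resulting composite with $\phi_{n}\circ\kappa$, after which the split-mono condition kills $\kappa$. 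Thus the weak transfer is not there to feed a per-piece swindle: it is there so that the assembled map has the identity as its $n$-th component, letting you peel off exactly the $\phi_{n}$ at the stage $n$ that compactness selects. This is also the true reason the theorem asserts only a phantom equivalence: one needs compactness to pick the index $n$, not (as you suggest) because a swindle is only available against compact test objects. Your proposed per-$U$ closed-excision decomposition of $P_U(X)$ plus a transfer-driven swindle would require inputs the hypotheses do not supply, and avoiding exactly that kind of construction is the point of the paper's route through \cite[Thm.\ 5.57]{buen} and the split-mono condition.
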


We refer to \cref{kgoperwergrefwf} and \cref{wkgpwegwergw9} for the notions of weakly finite homotopical asymptotic dimension and weak transfers. The proof of the theorem will be given at the end of \cref{okgpwergwrw1}.

\begin{rem}
\cref{jifofwqewfwf} is not the best possible result. 
In \cite[Thm. 10.4]{ass} (by axiomatizing arguments  of  \cite{nw1}) we have shown that if $X$ has  weakly finite asymptotic dimension (in the classical sense) and $E$ is any  strong coarse homology theory, then $E\cO\bP(X)\simeq 0$ and hence $\mu_{E,X}$ is actually an equivalence. 

Under the assumption on $E$ made in \cref{jifofwqewfwf},  by the main result of  \cite{Bunke_2019}  one can  replace the assumption on $X$ by 
  finite decomposition complexity.   
 Finite asymptotic dimension (in the classical sense) implies finite decomposition complexity, but the relation
 between   finite decomposition complexity and weakly finite homotopical asymptotic dimension remains unclear.

It was shown in   \cite{zbMATH01388911}  that the classical coarse Baum-Connes assembly map (this corresponds to the case $E=K\cX$)   is an isomorphism for  proper metric spaces  $X$  bounded geometry which admit a coarse embedding into a Hilbert space. 
The arguments for this result are of  analytic nature and strongly use the details of topological $K$-theory and $C^{*}$-algebras associated to proper metric spaces. 
It is not clear wether there is a version of \cref{jifofwqewfwf} for general coarse homology theories $E$
under the assumption that $X$ has a coarse embedding into a Hilbert space.
%
%
\hB \end{rem}

\begin{rem}
If one applies \cref{jifofwqewfwf} to the topological coarse $K$-homology $K\cX$ and a proper metric space $X$ of finite asymptotic dimension, then one recovers the result of \cite{Yu_1998}. 

The coarse algebraic homology $K\cX^{\alg}_{\Mod(R)}$
 with coefficients in the category of $R$-modules for some  ring $R$  
 is just another example of a strong coarse homology with weak transfers. Therefore
  \cref{jifofwqewfwf} implies that the coarse assembly map  $\mu_{K\cX^{\alg}_{\Mod(R)},X}$ is an   equivalence  for proper metric spaces $X$ of finite asymptotic dimension.
 This  statement  could be considered as a formalized version of  results of \cite{Carlsson_2004} or  \cite{Bartels_2003}.
 The corresponding generalization to spaces with finite decomposition complexity
  is due to \cite{Guentner:2010aa}, whose  formalized version  can also be obtained by specializing   \cite{Bunke_2019}.
   A precise comparison of \cref{jifofwqewfwf} with these results for algebraic $K$-theory
  is complicated since the main goal of these papers was to show the injectivity
  of the assembly map for algebraic $K$-theory of group rings using the descent principle. For them the statements about the coarse assembly
  maps were technical steps in a longer proof. The were formulated directly for geometric  models  of algebraic 
  $K$-theory  and not formalized in the abstract language of coarse homology theories. 
     \hB

\end{rem}

\section{Weakly finite homotopical asymptotic dimension}\label{okgpwergwrw1}

Let $W$ be a simplicial complex which we consider as a uniform bornological coarse space with coarse and uniform structure induced by the spherical path metric and some compatible bornology. We further consider  
a big family
 $\cV:=(V_{n})_{n\in \nat}$  of subcomplexes
 with the property that every bounded subset of $W$ is contained in   $V_{n}$ for some $n$ in $\nat$. We let $W_{h}$ be the bornological coarse space obtained from $W$ by equipping it with the hybrid coarse structure \cite[Def. 5.10]{buen} associated to the big family $\cV$. 
By definition a coarse entourage of $W$ belongs to the hybrid structure if for every uniform entourage $Z$
of $W$ there exists an $n$ in $\nat$ such that $U\cap (W\setminus V_{n})^{2}\subseteq Z$.
 Note that the bornological coarse structure induced by $W_{h}$ on $V_{n}$ coincides with the bornological coarse structure induced by $W$.

We let $\Sp\cX\langle \disc \rangle$ denote the localizing subcategory of $\Sp\cX$  generated by $\Yo(X )$ for all  bornological coarse spaces
$X$ with the  discrete coarse structure.  We let \begin{equation}\label{fewfeqwdqdwedq}\Yo(W_{h},\cV):=\colim_{n\in \nat} \Cofib(\Yo(V_{n})\to \Yo(W_{h}))\ .
\end{equation} 
The following has been shown in \cite[Thm. 5.57]{buen}.
\begin{prop}\label{kgopwergrefwef} If $\dim(W)<\infty$, then we have
  $\Yo(W_{h},\cV)\in \Sp\cX\langle \disc \rangle$.
\end{prop}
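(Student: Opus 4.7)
My plan is to prove the proposition by induction on $d = \dim(W)$, using coarse excision to peel off the top-dimensional simplices of $W$. Throughout, I will use that $\Sp\cX\langle\disc\rangle$ is a localizing subcategory of $\Sp\cX$, hence stable under fibres, cofibres, and filtered colimits, so assembling pieces is free provided each piece lies in $\Sp\cX\langle\disc\rangle$.

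For the base case $d=0$, the space $W$ is a set of vertices and the spherical path metric is the discrete metric, so the only relevant uniform entourage is the diagonal. By the definition of the hybrid structure, every coarse entourage $U$ of $W_h$ then satisfies $U\cap(W\setminus V_n)^2\subseteq \Delta$ for some $n$. Hence the coarse structure induced from $W_h$ on the complement $W\setminus V_n$ is discrete. Applying excision to the pair $(V_n,W\setminus V_n)$ and using $u$-continuity, the cofibre $\Cofib(\Yo(V_n)\to\Yo(W_h))$ is built from $\Yo$ of a discrete bornological coarse space, and is therefore in $\Sp\cX\langle\disc\rangle$. Passing to the colimit in $n$ as in \eqref{fewfeqwdqdwedq} gives $\Yo(W_h,\cV)\in\Sp\cX\langle\disc\rangle$.

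For the inductive step, I would write $W=A\cup B$ where $A$ is a small uniform open neighbourhood of the $(d-1)$-skeleton $W^{(d-1)}$ (obtained by removing barycentric disks from the top simplices) and $B$ is the disjoint union of those barycentric disks, one inside each $d$-simplex of $W$. Both inherit hybrid coarse structures restricted from $W_h$, and the big family $\cV$ restricts correspondingly. A uniform strong deformation retraction of $A$ onto $W^{(d-1)}$ together with homotopy invariance and $u$-continuity yields $\Yo(A_h,\cV\cap A)\simeq\Yo((W^{(d-1)})_h,\cV\cap W^{(d-1)})$, which lies in $\Sp\cX\langle\disc\rangle$ by the inductive hypothesis. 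The intersection $A\cap B$ deformation retracts onto a disjoint union of $(d-1)$-spheres, one per top simplex, and is again controlled by the $(d-1)$-dimensional case. Finally, $B_h$ is a disjoint union of uniformly bounded disks, and the hybrid condition forces coarse entourages of $B_h$ to become diagonal outside some $V_n$, i.e.\ to stay within individual disks at infinity; hence $\Yo(B_h,\cV\cap B)$ is equivalent to $\Yo$ of the discrete set indexing the $d$-simplices of $W$ not already absorbed into the $V_n$, which is trivially in $\Sp\cX\langle\disc\rangle$. Coarse excision for $W=A\cup B$ then provides a Mayer--Vietoris-type cofibre sequence assembling these pieces, closing the induction.

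The main obstacle is the careful interaction of the decomposition $W=A\cup B$ with the hybrid coarse structure: one must verify that the restrictions of the big family $\cV$ to $A$, $B$, and $A\cap B$ still control the hybrid structures of the corresponding subspaces, and that coarse excision applies to the relative pair in $W_h$ rather than only in $W$. A secondary subtlety is the argument for $B$: one has to check that the combination of uniform boundedness of each disk with the hybrid decoupling at infinity genuinely reduces $\Yo(B_h,\cV\cap B)$ to the $\Yo$ of a discrete bornological coarse space, which is essentially the content of the flasqueness-type arguments of \cite{buen} and is where the dimension hypothesis $\dim(W)<\infty$ is genuinely used (to guarantee that the induction terminates after finitely many steps).
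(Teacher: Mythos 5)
Your overall strategy --- induction on $\dim(W)$, decomposing $W$ into a thickened $(d-1)$-skeleton $A$ and a disjoint union $B$ of barycentric pieces of the top-dimensional simplices, then applying the decomposition (Mayer--Vietoris) and homotopy theorems of \cite{buen} --- is exactly the paper's. The base case and the treatment of $A$ and $A\cap B$ via deformation retraction to $(d-1)$-dimensional complexes are also as in the paper. The genuine gap is in your analysis of $B$.

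You write that ``the hybrid condition forces coarse entourages of $B_h$ to become diagonal outside some $V_n$'' and deduce from this that $\Yo(B_h,\cV\cap B)$ is equivalent to $\Yo$ of a discrete bornological coarse space. The first observation is correct: since the uniform structure on the barycenter set is discrete, every individual hybrid entourage $U$ eventually becomes the diagonal. But the $n$ beyond which $U$ is diagonal depends on $U$; for \emph{no} fixed $m$ is the coarse structure induced on $B\setminus V_m$ from $B_h$ actually discrete, since it contains restrictions of entourages that remain nondiagonal arbitrarily far out. So excision plus $u$-continuity does not turn $\Cofib(\Yo(V_m\cap B)\to\Yo(B_h))$ into the Yoneda of a discrete space, and your ``hence'' does not follow. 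The missing ingredient is the \emph{coarsening invariance} of the relative object $\Yo(B,\cV\cap B)$: it does not depend on the choice of coarse structure on $B$ compatible with the (discrete) uniform structure. This nontrivial fact is what licenses replacing the non-discrete coarse structure inherited from $W$ by the discrete one, after which membership in $\Sp\cX\langle\disc\rangle$ is immediate. Your parenthetical appeal to ``flasqueness-type arguments of \cite{buen}'' points vaguely in this direction, but the concrete deduction you write is not valid, and the coarsening-invariance step is precisely where the real content of this part of the proof lies.
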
\begin{proof}
The idea is to argue by induction on the dimension of $W$.
If $\dim(W)=0$, then $W_{h}$ is discrete and there is nothing   to  show.
If $\dim(W)=k$, then we write $W$ as a union of a thickening of the  $k-1$-skeleton
and a disjoint collection of $k$-simplices, see the pictures on \cite[p. 80]{buen}.  The decomposition theorem  \cite[Thm. 5.22]{buen}
presents  $ \Yo(W_{h},\cV)$ as a push-out.  The thickenings of the $k-1$-skeleton and the intersection
are homotopy equivalent in $\UBC$ to $k-1$-dimensional simplicial complexes, and the collection of $k$-simplices is homotopy equivalent to the   discrete uniform space $B$ of barycenters.  We then use the homotopy theorem \cite[5.26]{buen} in order to invoke the induction hypothesis and the following observation.

The  coarse structure induced on the set $B$ of barycenters is not the discrete one. But the relative 
coarse homology $\Yo(B,\cV\cap B)$ is actually coarsening invariant, i.e., it does not depend on the coarse structure as long as the latter is compatible with the uniform structure. Since $B$ has the discrete uniform structure we
can take the discrete coarse structure on $B$ as well and then conclude that $ \Yo(B,\cV\cap B)\in \Sp\cX\langle \disc \rangle$.
\end{proof}

Let $\bE$ be any strong coarse homology theory.

\begin{lem}\label{kogpwergwrefwf}
$E\cO\bP$ vanishes on $ \Sp\cX\langle \disc \rangle$.
\end{lem}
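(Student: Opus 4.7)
The plan is to reduce the claim to a single flasqueness statement about the cone construction. Since $\bP$, $\Yo^{\str}\cO$ and $E$ are all colimit-preserving and exact, so is the composition $E\cO\bP\colon\Sp\cX\to\cC$. Because $\Sp\cX\langle \disc \rangle$ is generated as a localizing subcategory by the objects $\Yo(X)$ for bornological coarse spaces $X$ with discrete coarse structure, it therefore suffices to verify that $E\cO\bP(\Yo(X))\simeq 0$ for each such $X$.

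For a discrete coarse structure every coarse entourage is contained in the diagonal, hence the only $U$-bounded supports are singletons and $P_U(X)\simeq X$ as a zero-dimensional simplicial complex, i.e.\ $X$ equipped with the discrete uniform structure. Writing $X^\delta$ for this uniform bornological coarse space, the defining colimit of $\bP$ collapses and we have $\cO\bP(\Yo(X))\simeq \Yo^{\str}(\cO(X^\delta))$. The claim thus reduces to the assertion that $\cO(X^\delta)$ is flasque as a bornological coarse space, since every coarse homology theory, and in particular $E$, vanishes on flasques.

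To produce a flasqueness witness I would take $f(t,x):=(g(t),x)$ for a strictly increasing smooth function $g\colon[0,\infty)\to[0,\infty)$ with $g(t)>t$, $g(t)-t\to 0$, $g'(t)<1$ and $g^n(0)\to\infty$; the choice $g(t)=t+1/(t+1)$ works. The key point is that, because the uniform structure on $X^\delta$ is discrete, the uniform entourages of $\R\otimes X^\delta$ are generated by sets of the form $\{|t-s|<\varepsilon\}\times\Delta_X$, and consequently the hybrid coarse entourages of $\cO(X^\delta)$ are precisely the subsets of $\Delta_X$ in the $X$-direction whose $\R$-displacement becomes arbitrarily small at infinity. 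The condition $g(t)-t\to 0$ then makes $f$ close to the identity in this structure, while $g^n(0)\to\infty$ guarantees that $f^n(\cO(X^\delta))$ eventually avoids any fixed bounded subset.

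The step I expect to require the most care is the stability property that $\bigcup_n(f^n\times f^n)(U)$ is contained in a coarse entourage of $\cO(X^\delta)$ for every coarse entourage $U$. The strict contraction $g'<1$ is what makes this work: for pairs $(t,s)$ with both coordinates large one invokes the hybrid condition on $U$ directly, while for pairs with $\min(t,s)$ bounded one uses the uniform estimate $\sup_{t,s\in[0,C]}|g^n(t)-g^n(s)|\to 0$, which follows from the divergence of $\sum_k(g^k(0)+1)^{-2}$. Once flasqueness of $\cO(X^\delta)$ is established, $E(\Yo^{\str}\cO(X^\delta))\simeq 0$ is immediate from the definition of a coarse homology theory, which completes the proof.
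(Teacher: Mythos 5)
Your reduction to the flasqueness of $\cO(X^{\delta})$ (for a discrete bornological coarse space $X$, $X^\delta$ being its discrete uniform refinement) is exactly the paper's strategy, and your identification $\bP(X)\simeq X^\delta$ is correct. Where you diverge is the final step: the paper observes the isomorphism $\cO(X_{\disc})\cong X\otimes\cO(*)$ of bornological coarse spaces and then appeals to the known flasqueness of $\cO(*)$ (which is stable under tensoring with any bornological coarse space, since $\id_X\otimes f$ is a flasqueness witness whenever $f$ is), whereas you re-derive flasqueness from scratch by constructing a witness $(t,x)\mapsto(g(t),x)$ with $g(t)=t+1/(t+1)$. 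Your construction is correct: the hybrid entourages of $\cO(X^\delta)$ are exactly the subsets of $\R^2\times\Delta_X$ of bounded $\R$-width shrinking to zero at infinity, so $g(t)-t\to 0$ gives closeness to the identity; $g^n(0)\sim\sqrt{2n}\to\infty$ gives escape from bounded sets; and for stability, the case split (pairs with both $\R$-coordinates large, handled by the hybrid condition on $U$, versus pairs with $\min$ bounded by some $C$, handled by $g^n(C)-g^n(0)=\prod_{k<n}g'(\xi_k)\cdot C\to 0$ via the divergence of $\sum_k(g^k(0)+1)^{-2}$) is exactly what is needed. Net: same mathematical content, but your version is self-contained while the paper's is shorter by citing the flasqueness of $\cO(*)$; recognizing the tensor decomposition $X\otimes\cO(*)$ is the more economical move and worth internalizing.
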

\begin{proof} It suffices to show that
$E\cO\bP(X)\simeq 0$  for all discrete bornological coarse spaces $X$.
 Then $\bP(X)$ is the uniform bornological coarse space $X_{\disc}$ obtained from  $X$ by equipping it with the discrete uniform structure.
We have an isomorphism $\cO(X_{\disc})\cong X\otimes \cO(*)$. Since the latter bornological coarse space is flasque we get $E\cO\bP(X)\simeq E(\cO(X_{\disc}))\simeq E(X\otimes \cO(*))\simeq 0$.
\end{proof}

 If $(W_{n})_{n\in \nat}$ is a family of simplicial complexes, then we form $W:=\bigsqcup_{n\in \nat} W_{n}$
with the big family $\cV:=(V_{n})_{n\in \nat}$ given by $V_{n}:=\bigcup_{m\le n}W_{m}$.

Note that $\BC$ has a symmetric monoidal structure $\otimes$ which descends to $\Sp\cX$.
We write $I_{min,min}$ for the set $I$ equipped with the discrete coarse structure and the minimal bornology.

Let $X$ be a bornological coarse space.

\begin{ddd}\label{elrhpgregeg}
We say that $X$ has finite homotopical asymptotic dimension  if there exists a sequence of  morphisms
$(\phi_{n}:X\to W_{n})$ in $\BC$ from $X$ to simplicial complexes $W_{n}$ such that
\begin{enumerate}
\item $\sup_{n\in \nat}\dim(W_{n})<\infty$.
\item  \label{grkoepwergrwegw} $\Yo(\phi_{n})$ is a split mono. \item \label{grkoepwergrwegw1} The map $\phi:=\sqcup_{n\in \nat} \phi_{n}:\nat_{min,min}\otimes X\to W_{h}$ is a morphism in $\BC$.
\end{enumerate}
\end{ddd}
Condition \cref{elrhpgregeg}.\ref{grkoepwergrwegw1} means that for every
entourage $U$ in $\cC_{X}$ and $\epsilon$ in $(0,\infty)$ there exists $n_{0}$ in $\nat$ such that  for all $n$ in $\nat$ with $n\ge n_{0}$ we have $\phi_{n}(U)\subseteq U_{\epsilon}$, where $U_{\epsilon}$ is the metric entourage of width $\epsilon$.

\begin{rem} We refer to \cite[Def. 9.4]{roe_lectures_coarse_geometry} for the classical definition of finite asymptotic dimension of a metric space  and to \cite[Thm. 9.9(d)]{roe_lectures_coarse_geometry} for a translation which is close in spirit to the definition above. 

The difference between the homotopical and classical notion is in  \cref{elrhpgregeg}.\ref{grkoepwergrwegw}, where for the classical notion we would require the stronger condition that $\phi_{n}$ is a coarse equivalence.
One could ask wether the homotopical  version of the finite asymptotic dimension condition is an interesting generalization.
\hB  
  \end{rem}

If $X$ is a bornological coarse space and $U$ is a coarse entourage of $X$, then by $X_{U}$ we denote the bornological coarse space obtained from $X$ by replacing the coarse structure by the in general smaller coarse structure generated by $U$.
\begin{ddd}\label{kgoperwergrefwf}
We say that $X$ has weakly finite homotopical asymptotic dimension if there exists a cofinal subset $\cC_{X}'\subseteq  \cC_{X}$   such that
$X_{U}$ has finite homotopical asymptotic dimension for all $U$ in $\cC_{X}'$.
\end{ddd}

\begin{rem}
Let $X$ be a bornological coarse space with weakly finite homotopical asymptotic dimension.
Then   the family of   simplicial complexes $(W_{U,n})_{n\in \nat}$
witnessing the finite homotopical asymptotic dimension of the spaces $X_{U}$ according to \cref{elrhpgregeg} could depend on $U$. In particular, the number
$\sup_{n\in \nat} \dim(W_{U,n})$ could
be unbounded if $U$ runs over the cofinal subset $\cC_{X}'$  mentioned in \cref{kgoperwergrefwf}. 
\hB
\end{rem}

\begin{ex}
We consider the set $X:=\bigsqcup_{n\in \nat}\R^{n}$ with the colimit coarse structure. In this structure the components are coarsely disjoint and the generating entourages 
are the metric entourages of the components. We equip $X$ with the  bornology  generated by the metrically bounded sets in the components. 

The bornological coarse space $X$ has weakly finite asymptotic dimension.  In order to see this we let the cofinal subset $\cC_{X}'$ consist of entourages $U_{n}$ which are the width-one  metric entourages on the components $\R^{m}$ for $m\le n$ and the diagonal on the components $\R^{m}$ with $m>n$. Then $X_{U_{n}}$ has asymptotic dimension $n$ in the classical sense. 

Using ordinary coarse homology one can check that  $X$ does not have finite homotopical asymptotic dimension. 

A  coarsely connected example of a bornological coarse space with weakly finite asymptotic dimension, but
not finite homotopical asymptotic dimension, is   $ \colim_{n\in \nat}\Z^{n}$ (with the standard inclusions $\Z^{n}\to \Z^{n+1}$) with the colimit coarse structure and the minimal bornology.
 \hB
 \end{ex}

 A set $I$ gives rise to an endofunctor $I_{min,min}\otimes-$ of $\BC$ or $\Sp\cX$.
Using excision, for $i$ in $I$ the decomposition $I=\{i\}\sqcup I\setminus\{i\}$ gives  rise to  natural transformation
 $\pr_{i}:I_{min,min}\otimes -\to  \id$ of endofunctors of $\Sp\cX$.

Let $E$ be a strong coarse homology theory. Following \cite[Def. 2.16]{Bunke_2019} we adopt the following:
\begin{ddd}\label{wkgpwegwergw9}
$E$ has weak transfers if for any $X$ in $\BC$ there exists
a map $$\tr_{X}:E(X)\to E(\nat_{min,min}\otimes X)$$ such that
the components $E(X)\stackrel{\tr_{X}}{\to} E(\nat_{min,min}\otimes X)\stackrel{\pr_{n}}{\to} E(X)$ are equivalent to the identity for all $n$ in $\nat$.
\end{ddd}

The obvious isomorphisms $P_{\diag(\nat)\times  U}(\nat_{min,min}\otimes X)\cong \nat_{\disc}\otimes P_{  U}(  X)$
for any $X$ in $\BC$ with entourage $U$
and $\cO(\nat_{\disc}\otimes Y)\cong \nat_{min,min}\otimes \cO(Y)$ for any $Y$ in $\UBC$
induce an equivalence
  $\Yo^{\str} \cO\bP(\nat_{min,min}\otimes X)\simeq \nat_{min,min}\otimes \Yo^{\str}\cO\bP(X)$.  Hence if $E$ has weak transfers, then so has   the coarse homology theory
 $ E\cO\bP$.

\begin{ex}
If $E$ has transfers in the sense of \cite{trans}, then it has weak transfers \cite[Rem. 2.18]{Bunke_2019}. Coarse topological $K$-homology  $K\cX_{\bC}$  with coefficients in some $C^{*}$-category $\bC$, or coarse algebraic $K$-homology $K\cX^{\alg}_{\bC}$ with coefficients in some left-exact $\infty$-category $\bC$, have transfers as shown in \cite{coarsek} or \cite{unik}, respectively. 
Likewise the universal versions $\UK\cX_{\bC}$ or $\EE\cX_{\bC}$  explained in \cref{okwegpwegerw9} have transfers.
\hB \end{ex}

\begin{proof}[Proof of Theorem \ref{jifofwqewfwf}]
By $u$-continuity we have $E\cO\bP(X)\simeq \colim_{U\in \cC_{X}} E\cO\bP(X_{U})$. By assumption, 
$X_{U}$ has finite homotopical asymptotic dimension for a cofinal set of $U$ in $\cC_{X}$. Since a filtered colimit of phantom objects is phantom it suffices to show that
$E\cO\bP(X)$ is phantom under the assumption that $X$ itself has finite homotopical asymptotic dimension.

The map $\phi$ below is taken from  \cref{elrhpgregeg}.\ref{grkoepwergrwegw1}.
We let $p:E\cO\bP( W_{h})\to E\cO\bP( W_{h},\cV)$ denote the canonical projection.
The target of the map
$$E\cO\bP(X)\stackrel{\tr_{X}}{\to}  E\cO\bP(\nat_{min,min}\otimes X)\stackrel{\phi}{\to} E\cO\bP( W_{h})\stackrel{p}{\to} E\cO\bP( W_{h},\cV)$$ vanishes by a combination of \cref{kgopwergrefwef} and \cref{kogpwergwrefwf}. If $\kappa:C\to E\cO\bP(X)$ is a compact map, then in view of \eqref{fewfeqwdqdwedq} and \cref{kopwegregwefw}.\ref{elgpwegewrfw} 
and by excision for the last equivalence  the composition
\begin{equation}\label{fwerfewrfwref}C\stackrel{\kappa}{\to} E\cO\bP(X)\stackrel{\phi\circ \tr_{X}}{\to}E\cO\bP( W_{h})\stackrel{p_{n-1}}{\to} E\cO\bP( W_{h},V_{n-1}) \simeq E\cO\bP( W_{h}\setminus V_{n-1})
\end{equation}  vanishes for some $n$ in $\nat$.
Again using excision we have a projection $$E\cO\bP( W_{h}\setminus V_{n-1})\simeq
E\cO\bP( W_{h}\setminus V_{n})\oplus E\cO\bP(W_{n})\to  E\cO\bP(W_{n})\ .$$
The composition of this projection with the map $p_{n-1}\circ \phi\circ \tr_{X}$ appearing in \eqref{fwerfewrfwref} is the map induced by the component $\phi_{n}$.
Therefore the  map $$C\stackrel{\kappa}{\to} E\cO\bP(X)\stackrel{\phi_{n}}{\to} E\cO\bP( W_{n})$$  vanishes. Since the second map is  split monomorphism 
by \cref{elrhpgregeg}.\ref{grkoepwergrwegw} we can conclude that $\kappa \simeq 0$. Since 
$\kappa$ was arbitrary we conclude that $ E\cO\bP(X)$ is a phantom object.
\end{proof}

%
%


    \bibliographystyle{alpha}
\bibliography{forschung2021}

\end{document}